\documentclass[12pt]{article}

\usepackage{datetime}

\usepackage[all]{xy}

\usepackage{amssymb, amsmath, amsthm,eucal}

\usepackage{mathptmx}
\usepackage[scaled=.90]{helvet}
\usepackage{courier}

\parindent0pt 
\parskip12pt

\sloppy

\setcounter{secnumdepth}{3}

\lefthyphenmin=3
\righthyphenmin=3

\clubpenalty=10000
\widowpenalty=10000


\usepackage[pdftex]{hyperref}

\hypersetup{
  colorlinks = true,
  linkcolor = black,
  urlcolor = blue,
  citecolor = black,
}


\renewcommand{\H}{\mathrm{H}}
\renewcommand{\mod}{\;\mathrm{mod}\;}
\renewcommand{\dim}{\operatorname{dim}}

\newcommand{\Zset}{\mathbb{Z}}

\newcommand{\Rset}{\mathbb{R}}
\newcommand{\Cset}{\mathbb{C}}

\newtheorem{thm}{Theorem}

\newtheorem{prop}{Proposition}


\usepackage{graphicx}

\DeclareRobustCommand{\coprod}{\mathop{\text{\fakecoprod}}}
\newcommand{\fakecoprod}{%
  \sbox0{$\prod$}%
  \smash{\raisebox{\dimexpr.9625\depth-\dp0}{\scalebox{1}[-1]{$\prod$}}}%
  \vphantom{$\prod$}%
}


\title{\bf A stable approach to the \hbox{equivariant Hopf theorem}}

\author{Markus Szymik}

\date{March 2007}

\begin{document}

\maketitle

\begin{abstract}
\noindent
  Let~$G$ be a finite group. For semi-free~$G$-manifolds
  which are oriented in the sense of Waner~\cite{Waner}, the
  homotopy classes of~$G$-equivariant maps into a~$G$-sphere
  are described in terms of their degrees, and the degrees
  occurring are characterised in terms of congruences. This
  is first shown to be a stable problem, and then solved
  using methods of equivariant stable homotopy theory with
  respect to a semi-free~$G$-universe.
\end{abstract}

\thispagestyle{empty}


\section*{Introduction}

All manifolds considered will be smooth, closed, and
connected. If~$M$ is an oriented~$n$-manifold, any
continuous map from~$M$ to an~\hbox{$n$-dimensional} sphere
has a degree, which is an integer.  The Hopf
theorem~\cite{Hopf} says that the degree
cha\-rac\-te\-ri\-ses the homotopy class of such a map, and
that every integer occurs as the degree of such a map. Note
that the degree map from homotopy classes to the integers
factors through the group~$\{M,S^n\}$ of {\it stable}
homotopy classes of maps from~$M$ to~$S^n$. The Hopf theorem
follows from the facts that both the stabilisation and the
{\it stable} degree map are isomorphisms. It is this
approach to that result which will be generalised here to
the equivariant category: it will first be shown that the
problem, which at first sight seems to be unstable, is in
fact stable, and then it will be solved using stable
techniques.

The Hopf theorem has been generalised to equivariant
contexts by a number of people, motivated by the problem to
describe maps between representation spheres or homotopy
representations. See Section
8.4.~in~\cite{tomDieck:LNM},~\cite{Hauschild:Zerspaltung},
Section II.4 in~\cite{tomDieck:Book},
or~\cite{BalanovKushkuley}, \cite{Balanov},~\cite{Ferrario},
and the references therein for recent contributions. All of
these work unstably with a fairly elaborate obstruction
machinery. Furthermore, they require the top-dimensional
cohomology of the fixed point spaces to be cyclic,
neglecting situations in which the fixed point space is not
connected. The point of view taken here avoids elaborate
notions of degree (and corresponding Lefschetz and Euler
numbers) as in~\cite{Dold},
\cite{Ulrich},~\cite{Lueck},~\cite{Ize}, and the references
therein, to name just a few. For the present purpose, the
degree of a map is its stable homotopy class, and
calculations of stable homotopy groups -- which are much
more accessible than their unstable relatives -- process
this into numerical information.

Throughout the paper, let~$G$ be a finite group. There will
be occasions to assume that its order is a prime, but only
for illustrative purposes. Let~$W$ be a
real~$G$-representation. Recall that a~$G$-manifold~$M$ is
called~{\it~$W$-dimensional}, if for every point~$x$ in~$M$
the tangential representation~$T_xM$ of the stabiliser~$G_x$
is isomorphic to the restriction of~$W$ from~$G$
to~$G_x$. In particular, the components of the fixed point
space
\begin{displaymath}
  M^G=\coprod_\alpha M^G_\alpha
\end{displaymath}
are manifolds of equal dimension. In addition, a
$W$-dimensional manifold~$M$ is called {\it oriented} if~$W$
is oriented and there is a compatible choice of
isomorphisms~$T_xM\cong W$ which are orientation
preserving. In particular, the components of the fixed point
space inherit an orientation. (See~\cite{Waner} and the
references therein for details.)

{\bf Example~1.} If $M\rightarrow N$ is a cyclic branched
covering of complex manifolds, the representation $W$ is a
direct sum of trivial summands and a complex line on which
the cyclic group acts in the natural way. \hfill\qedsymbol

The following example has been considered and applied
in~\cite{Szymik:Galois}. Note that contrary to the previous
example, the fixed point space is not connected here.

{\bf Example~2.} For a prime order group~$G$, and a
complex~$G$-repre\-sen\-tation~$V$, let~$M=\Cset P(V)$ be
the associated projective space. The fixed point space is
the disjoint union of the projective spaces of the
isotypical summands, hence is equidimensional if~$V$ is a
multiple of the regular~$G$-representation. In that case,
all tangential representations are isomorphic, as
complex~$G$-representations, to the complement of a trivial
line in~$V$. This we may take as our~$W$.  \hfill\qedsymbol

In this writing, we will be concerned with
oriented~$W$-dimensional semi-free~$G$-manifolds~$M$, and
prove an equivariant Hopf theorem which gives a description
of the set of~$G$-homotopy classes of~$G$-maps from~$M$
to~$S^W$ in the generic case when the dimension and
codimension of the fixed point space are at least~$2$. (See
Section~\ref{sec:comparison} for free actions; trivial
actions can be dealt with as in Example~1 below.) By
orientability, the codimension of the fixed point space is
always at least two if the action is non-trivial.

While the Hopf problem looks like an unstable problem, its
turns out to be stable. Equivariant stable homotopy theory
is more complicated than ordinary stable homotopy theory, as
there is a stable homotopy category for each~$G$-{\it
universe}, which is an
infinite-dimensional~$G$-representation which contains the
trivial representation, and contains each of its
subrepresentation with infinite multiplicity.~(Standard
references for equivariant stable homotopy theory
are~\cite{Adams},~\cite{LMS},~\cite{Mayetal},
and~\cite{GreenleesMay}.)  The full-flavoured equivariant
stable homotopy category corresponds to a {\it
complete}~$G$-universe, which contains every
irreducible~$G$-representation. A {\it semi-free} universe
is obtained from a complete~$G$-universe by restriction to
the semi-free subrepresentations. The stable homotopy
category depends only on the~$G$-isomorphism class of the
universe, and we will
write~$\{X,Y\}^G_{\mathrm{\mathrm{sf}}}$ for the morphism
groups from~$X$ to~$Y$ in an equivariant stable homotopy
category corresponding to a semi-free~$G$-universe. As the
stable categories requires based objects, we will let
$M_+$ denote the $G$-space which is $M$ with a disjoint
$G$-fixed basepoint added. By adjunction, there is a
canonical bijection between the set $[M_+,S^W]^G$ of based
$G$-homotopy classes and the set of (unbased) $G$-homotopy
classes of maps from $M$ to $S^W$. The proof of the
following result is in Section~\ref{sec:stability}. It
justifies working stably afterwards.

\parbox{\linewidth}{\begin{thm}\label{thm:stability} Let~$W$
  be a~$G$-representation, and let~$M$ be a~$W$-dimensional
  semi-free~$G$-manifold such that the dimension and
  codimension of the fixed point space are at
  least~$2$. Then the stabilisation map
  \begin{displaymath}
    [M_+,S^W]^G\longrightarrow\{M_+,S^W\}^G_{\mathrm{\mathrm{sf}}}
  \end{displaymath}
  is bijective.
\end{thm}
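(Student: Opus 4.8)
The plan is to identify the stabilisation map with the canonical map from $[M_+,S^W]^G$ into the colimit $\operatorname{colim}_V\,[\Sigma^V M_+,\Sigma^V S^W]^G$, where $V$ runs over a cofinal sequence of finite-dimensional sub-$G$-representations of a semi-free universe, and then to show that \emph{every} bonding map in this system — in particular the first one, which is the tightest — is already a bijection. Since a semi-free $G$-representation is a sum of a trivial representation and a free one, a convenient cofinal sequence is $U_k=k\Rset\oplus k\rho$, where $\rho$ is the sum of the free irreducible real $G$-representations; note that $G$ acts freely on the normal sphere of a fixed point of $M$, so $G$ has periodic cohomology and $\rho\neq 0$, and every free irreducible occurs in $\rho$, which is what makes $\{U_k\}$ cofinal. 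Each bonding map is then a $U_1$-fold suspension, to which I would apply the equivariant Freudenthal suspension theorem (see \cite{tomDieck:Book} or \cite{Adams}).

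The equivariant suspension theorem reduces bijectivity of a $V$-fold suspension $[X,Y]^G\to[\Sigma^VX,\Sigma^VY]^G$ to a list of numerical inequalities indexed by the subgroups $H$ of $G$: a ``self'' condition $\dim X^H\le 2\operatorname{conn}(Y^H)$, together with ``cross'' conditions relating $\dim X^H$, $\operatorname{conn}(Y^K)$ and $\dim V^H-\dim V^K$ for pairs $K\subsetneq H$. This is where semi-freeness does the work: the only isotropy groups occurring on $M$ are $\{e\}$ and $G$, every intermediate subgroup $H$ satisfies $M^H=M^G$ and $(S^W)^H=S^{W^G}$, and $W=W^G\oplus\widetilde W$ with $\widetilde W$ free, so the list collapses to three conditions. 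With $Y=S^W$, which is $(\dim W-1)$-connected and has $Y^G=S^{W^G}$ $(\dim W^G-1)$-connected, and using $\dim M=\dim W$ and $\dim M^G=\dim W^G=\dim W-\operatorname{codim}M^G$, these become: $\dim W\le 2(\dim W-1)$, automatic; $\dim M^G\le 2(\dim W^G-1)$, which is exactly the hypothesis $\dim M^G\ge 2$; and the cross condition between $\{e\}$ and $G$, which comes down to a positivity statement on $\operatorname{codim}M^G$ and is thus covered by the hypothesis $\operatorname{codim}M^G\ge 2$ — the latter being automatic anyway when the action is non-trivial, by orientability in the sense of Waner, since a codimension-one fixed point set forces a sign summand in $W$. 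All later bonding maps only improve these estimates, connectivity growing twice as fast as dimension under suspension while the free part of $U_k$ grows, so they are bijective as well and the colimit is attained at the first stage.

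As an alternative organisation — and a transparent way to see why the two hypotheses are the right ones — I would instead smash the isotropy-separation cofibre sequence $EG_+\to S^0\to\widetilde{EG}$ with $M_+$. The fixed summand $\widetilde{EG}\wedge M_+$ is $G$-equivalent to $\widetilde{EG}\wedge (M^G)_+$, and its contribution to both the unstable and the stable homotopy set is governed by the ordinary Hopf problem for $M^G\to S^{W^G}$, for which stabilisation is a bijection precisely because $\dim M^G\ge 2$. The free summand $EG_+\wedge M_+$ contributes fibrewise-Hopf data over $BG$ with fibre $S^{\dim W}$, where stabilisation is a bijection because $\dim M\ge 2$. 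The stabilisation map is then a map between the long exact sequences of pointed sets attached to these cofibre sequences, and a five-lemma-type argument finishes the proof once the two end terms are matched; the codimension hypothesis $\operatorname{codim}M^G\ge 2$ enters here to keep the relevant relative term simply connected, so that the pieces are groups in the range that matters.

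The main obstacle is the bookkeeping of the equivariant Freudenthal theorem: pinning down the precise form of the cross conditions and checking that the hypotheses supply exactly enough room at the first suspension. In the cofibre-sequence approach the analogous difficulty is verifying that the stabilisation map is genuinely compatible with the two exact sequences of pointed sets and that everything in play is a group in the relevant range — or, failing that, replacing the five lemma by an $EHP$-style argument on pointed sets. A secondary point worth care is the cofinality of $\{U_k\}$ inside the semi-free universe, which, as noted, uses that every free irreducible appears in $\rho$.
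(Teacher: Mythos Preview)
Your main approach is the paper's: apply Hauschild's equivariant Freudenthal suspension theorem with a semi-free suspension representation and verify the numerical conditions. The paper states the theorem in the form ``$\operatorname{conn}(Y^H)\ge\tfrac12\dim X^H$ whenever $V^H\neq 0$'' together with ``$\operatorname{conn}(Y^K)\ge\dim X^H+1$ whenever $K<H$ with $V^H\neq V^K$'', and your three checks correspond to its three cases.

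One point in your write-up needs sharpening, and it is precisely the point the paper singles out. You say the list ``collapses to three conditions'' because $M^H=M^G$ and $(S^W)^H=S^{W^G}$ for intermediate $H$. That is not the right reason: for a pair $K<H$ with both non-trivial, the cross-condition would then read $\operatorname{conn}(S^{W^G})\ge\dim M^G+1$, i.e.\ $\dim W^G\ge\dim M^G+2$, which \emph{fails} since $\dim W^G=\dim M^G$. What actually kills this case is that your suspension representation $V=U_1=\Rset\oplus\rho$ is itself semi-free, so $V^K=V^G=V^H$ and the hypothesis $V^H\neq V^K$ is never triggered for such pairs. The paper isolates exactly this observation as the reason the semi-free universe is forced: in a complete universe these bad cross-conditions would be live and the argument would break without a further gap hypothesis on $M$. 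So your proof goes through, but the sentence justifying the collapse should point to $V^H=V^K$, not to $M^H=M^G$. (Minor: the cross-condition for $K=1$, $H\neq 1$ is exactly $\operatorname{codim}M^G\ge 2$, not merely ``positivity''.)

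Your alternative isotropy-separation outline is a genuinely different route and is plausible, but the paper does not take it, and as written it remains a sketch whose five-lemma step for pointed sets would need care.
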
}
 
In order to study the stable
groups~\hbox{$\{M_+,S^W\}^G_{\mathrm{\mathrm{sf}}}$}, the
standard approach is to associate to a stable~$G$-map~$f$
the set of all the degrees of the fixed point maps~$f^H$ for
the various subgroups~$H$ of~$G$. In the semi-free case, the
only subgroups relevant are~$1$ and~$G$, so that the fixed
point maps can be organised into the {\it ghost map}
\begin{equation}\label{ghost map}
  \{M_+,S^W\}^G_{\mathrm{\mathrm{sf}}}
  \longrightarrow 
  \{M_+,S^W\}
  \oplus
  \{M^G_+,S^{W^G}\}
\end{equation}
which sends a stable~$G$-map~$f$ to the pair~$(f,f^G)$. A
priori, the first factor of the ghost map lies in
the~$G$-invariants of~$\{M_+,S^W\}$, but the present
assumptions on the~$G$-actions on~$M$ and~$W$ ensure
that~$G$ acts trivially on this group. Using the ordinary
Hopf theorem, the image of an~$f$ under~(\ref{ghost map})
can be interpreted in terms of degrees: an integer~$x$ and a
sequence~$(y_\alpha)$ of integers, indexed by the components
of~$M^G$. (See Section~\ref{sec:examples} for details.) One
may show -- using standard splitting and localisation
techniques -- that the ghost map~(\ref{ghost map}) is an
isomorphism away from the order of~$G$, but this will also
be shown directly in the course of this investigation. It
means that the kernel and cokernel are finite abelian
groups. The following summarises
Propositions~\ref{prop:structure of the source}
and~\ref{prop:ghost map is injective} of the main part.

\begin{thm}
  \label{thm:structure}
  Let~$M$ be an oriented~$W$-dimensional~$G$-manifold. The
  source and the target of the ghost
  map~{\upshape(\ref{ghost map})} are free abelian. Their
  rank is one larger than the number of components
  of~$M^G$. The map is injective and the cokernel is cyclic
  of order~$|G|$.
\end{thm}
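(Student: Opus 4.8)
The plan is to deduce all the assertions from one description of the source, using equivariant Atiyah duality together with a tom Dieck splitting for the semi-free universe. The target is immediate: by the ordinary Hopf theorem (and the fact that stabilisation is an isomorphism in this range) $\{M_+,S^W\}\cong\Zset$ via the degree, while $\{M^G_+,S^{W^G}\}=\bigoplus_\alpha\{(M^G_\alpha)_+,S^{W^G}\}\cong\Zset^{c}$ because each component $M^G_\alpha$ is a closed oriented manifold of dimension $\dim W^G$; here $c$ is the number of components of $M^G$. So the target is free abelian of rank $c+1$, and everything reduces to the source and the ghost map.

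For the source I would first invoke equivariant Atiyah duality in a semi-free universe, into which $M$ embeds because it is semi-free and $W$-dimensional: $D_G(M_+)\simeq M^{-TM}$, hence
\[
  \{M_+,S^W\}^G_{\mathrm{sf}}\;\cong\;\pi_0^G(M^{S^W-TM}).
\]
Up to a shift the right-hand spectrum is a suspension spectrum, so the splitting theorem applies; over a semi-free universe only the conjugacy classes $(1)$ and $(G)$ contribute, because $M$ is semi-free and hence the representation normal to $M^G$ is $G$-free away from the origin. The $(G)$-summand is $\pi_0\Phi^G(M^{S^W-TM})=\pi_0((M^G)^{S^{W^G}-TM^G})=\{M^G_+,S^{W^G}\}\cong\Zset^{c}$, again by the Hopf theorem. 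The $(1)$-summand is $\pi_{-n}((M^{-TM})_{hG})$ with $n=\dim W=\dim M$; since $\pi^k_s(M)=\{M_+,S^k\}$ vanishes for $k>n$ and equals $H^n(M;\Zset)\cong\Zset$ for $k=n$ (the top cohomotopy of an oriented $n$-manifold), the homotopy-orbit spectral sequence for $(M^{-TM})_{hG}$ contributes, in the relevant total degree, only the group $H_0(G;\Zset)=\Zset$, the action being trivial by orientability. Hence the source is free abelian of rank $c+1$.

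It remains to read off the ghost map in these coordinates. Its fixed-point component $f\mapsto f^G$ is the passage to geometric fixed points, which under the splitting is precisely the projection onto the $(G)$-summand: the identity on $\Zset^{c}$ and zero on the $(1)$-summand. Its underlying component, the restriction $\mathrm{res}^G_1$, acts on the $(1)$-summand as follows: that summand is the image of the transfer $\mathrm{tr}^G_1\colon\{M_+,S^W\}\to\{M_+,S^W\}^G_{\mathrm{sf}}$, and $\mathrm{res}^G_1\circ\mathrm{tr}^G_1=\sum_{g\in G}g^{\ast}=|G|$ on $\{M_+,S^W\}\cong\Zset$ with its trivial action, so a generator of the $(1)$-summand maps to $|G|$ times a generator of $\{M_+,S^W\}$. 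Therefore, in suitable bases, the ghost map is an upper triangular $(c+1)\times(c+1)$ integer matrix with diagonal entries $|G|,1,\dots,1$. Its determinant is $\pm|G|\neq0$, so the ghost map is injective; and since it has a $c\times c$ minor equal to $\pm1$, its Smith normal form is $\mathrm{diag}(1,\dots,1,|G|)$, so the cokernel is cyclic of order $|G|$.

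The step I expect to be the main obstacle is making the duality and the splitting legitimate over the incomplete, semi-free universe rather than the complete one --- in particular, checking that no summands beyond $(1)$ and $(G)$ intervene and that the geometric fixed point term is the honest one; this is precisely where semi-freeness of $M$ (the complement of $W^G$ in $W$ being $G$-free off the origin) is used. The other delicate point is the bookkeeping of the integers: that the underlying degree attached to the $(1)$-summand generator is exactly $|G|$, via the transfer identity above, and that the fixed-point map is an isomorphism on the $(G)$-summand; these are where the orientation hypotheses on $M$ and $W$ enter. Granting this, the statements on rank, freeness, injectivity, and cokernel all follow.
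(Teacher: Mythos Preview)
Your approach is genuinely different from the paper's. The paper never dualises: it splits the \emph{source} of the ghost map by mapping the cofibre sequence $M^G_+\to M_+\to M/M^G$ into $S^W$. The free quotient $M/M^G$ is handled by first reducing to $S^W/S^{W^G}$ via a Pontryagin--Thom collapse onto a neighbourhood of a fixed point, and then computing $\{S^W/S^{W^G},S^W\}^G_{\mathrm{sf}}$ by an \emph{ad hoc} comparison of cellular spectral sequences (the point being that, because $G$ acts orientation-preservingly on $S^W$, the $E_1$-differentials for the free $G$-CW structure and for the quotient CW structure agree). The fixed part $\{M^G_+,S^W\}^G_{\mathrm{sf}}$ is identified with $\{M^G_+,S^{W^G}\}$ using Lewis's splitting for trivial sources. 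A snake-lemma comparison of the resulting short exact sequence with the obvious split sequence on the target then yields injectivity and the cokernel. Your route---Atiyah duality followed by a tom Dieck/isotropy-separation decomposition of the dual, then reading off the ghost map as a triangular matrix---is more structural and, once the machinery is in place, shorter; the paper's route is more elementary and sidesteps the delicate change-of-universe questions you rightly flag.

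One point deserves sharpening. You write that ``up to a shift the right-hand spectrum is a suspension spectrum, so the splitting theorem applies.'' The shift is $\Sigma^{W-V}$ for a \emph{nontrivial} semi-free representation $V$, and the tom Dieck splitting describes $(\Sigma^\infty X)^G$, not $\pi_0^G(\Sigma^{W-V}\Sigma^\infty X)=[S^{V-W},\Sigma^\infty X]^G$; categorical fixed points do not commute with representation-sphere smashing. What you really need is the isotropy separation cofibre sequence $(EG_+\wedge E)^G\to E^G\to\Phi^G E$ for $E=M^{S^W-TM}$, together with an Adams isomorphism $(EG_+\wedge E)^G\simeq E_{hG}$ in the semi-free universe, and the vanishing of the boundary map. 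Your homotopy-orbit spectral sequence argument then does give $\pi_0(E_{hG})\cong\Zset$ (only $(p,q)=(0,0)$ contributes, the action on $\pi_0 E\cong\Zset$ being trivial by orientability), and $\Phi^G E\simeq (M^G)^{S^{W^G}-T(M^G)}$ gives the $\Zset^c$. The identification of a generator of the $(1)$-summand with $\mathrm{tr}^G_1$ of a generator of $\{M_+,S^W\}$---on which your $|G|$ diagonal entry rests---also uses that the edge map $\pi_0(E)\to\pi_0(E_{hG})$ is an isomorphism here; this is fine, but it is an extra step beyond ``image of transfer''. In short, the obstacles you anticipate are exactly the ones that need work, and with them resolved your argument goes through; the paper simply avoids them by staying on the unstable side of the duality and doing the bookkeeping by hand.
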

  
In other words, stable~$G$-maps are determined by their
degrees, and the numbers that occur satisfy a relation. The
following result, which is proved in
Section~\ref{sec:examples}, says which.

\parbox{\linewidth}{\begin{thm}
  \label{thm:congruences} 
  The image of the ghost map~{\upshape(\ref{ghost map})} is
  given by the subgroup of integers~$x$ and
  sequences~$(y_\alpha)$ for which the congruence
  \begin{displaymath}
    x\equiv\sum_{\alpha}y_{\alpha}\mod |G|
  \end{displaymath}
  is satisfied.
\end{thm}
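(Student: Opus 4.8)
The plan is to take Theorem~\ref{thm:structure} as the sole serious input and to add one explicit family of maps. By that theorem the ghost map~(\ref{ghost map}) is injective with cokernel cyclic of order~$|G|$, and, as set up just before its statement, its target is identified through the ordinary Hopf theorem with the group of pairs consisting of an integer~$x$ (an underlying degree) and a finite sequence $(y_\alpha)$ of integers (the degrees of the restrictions to the components~$M^G_\alpha$ of~$M^G$). Consequently the image is the kernel of \emph{some} surjective homomorphism $\theta$ from this target onto~$\Zset/|G|$, necessarily of the shape $\theta(x,(y_\alpha))=ax+\sum_\alpha b_\alpha y_\alpha$ for integers~$a$ and~$b_\alpha$. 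Everything then comes down to determining $a$ and the~$b_\alpha$ modulo~$|G|$, at least up to one common unit.

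To that end I would, for each component~$\alpha$, fix a point~$p\in M^G_\alpha$ and use the equivariant slice theorem: since $M$ is oriented and $W$-dimensional, a small $G$-invariant closed disk~$D$ about~$p$ is $G$-diffeomorphic to the unit disk of~$W$ in an orientation-preserving way, so that $D/\partial D\cong S^W$ as oriented $G$-spaces. Collapsing the complement of the interior of~$D$ onto the basepoint then defines a $G$-map $f_\alpha\colon M\to S^W$ whose underlying degree is~$1$, whose restriction to~$M^G_\alpha$ is again a collapse map and hence of degree~$1$, and whose restriction to every other component of~$M^G$ is constant. Thus $(1;(\delta_{\alpha\beta})_\beta)$ lies in the image of the ghost map, and evaluating $\theta$ on it gives $a+b_\alpha\equiv 0\pmod{|G|}$ for each~$\alpha$.

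Hence $\theta(x,(y_\alpha))\equiv a\bigl(x-\sum_\alpha y_\alpha\bigr)\pmod{|G|}$. Since $\theta$ is onto, $a$ is a unit modulo~$|G|$, so the kernel of~$\theta$, which is the image of the ghost map, is exactly the subgroup of those $(x,(y_\alpha))$ with $x\equiv\sum_\alpha y_\alpha\pmod{|G|}$. This is the assertion.

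The only points demanding attention are routine: checking that $f_\alpha$ is honestly $G$-equivariant and carries the stated degrees, which is just the slice theorem being modelled on~$W$ together with some orientation bookkeeping, and, if one does not wish to quote it, reproving that the cokernel is cyclic of order exactly~$|G|$ --- but this, the one genuinely substantial point, has already been settled in Theorem~\ref{thm:structure}, so the present statement only pins down \emph{which} index-$|G|$ subgroup occurs. For completeness I would also record the more geometric line of argument: compute $\deg f$ from the preimage of a regular value lying in~$S^{W^G}$; this preimage is $G$-invariant, its free orbits contribute multiples of~$|G|$ because $G$ acts orientation-preservingly, and its fixed points account for~$\deg f^G$. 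The catch there, which the collapse-map argument avoids, is the sign of the normal derivative of~$f$ at the fixed preimage points --- a $G$-equivariant automorphism of the semi-free normal representation~$W\ominus W^G$; that sign is in fact~$+1$ when $|G|>2$, because such a representation has no summand of real type, and is harmless modulo~$2$ when $|G|=2$.
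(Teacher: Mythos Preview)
Your proof is correct and follows essentially the same line as the paper's: both construct, for each component~$\alpha$, the Pontryagin--Thom style collapse map~$f_\alpha$ around a fixed point, observe that it maps under the ghost map to~$(1,1_\alpha)$, and then invoke Theorem~\ref{thm:structure} to conclude. The only difference is cosmetic: the paper simply asserts that the congruence subgroup is the \emph{unique} index-$|G|$ subgroup containing all the~$(1,1_\alpha)$, whereas you spell out this uniqueness by writing the quotient map as~$\theta(x,(y_\alpha))=ax+\sum_\alpha b_\alpha y_\alpha$ and solving the resulting congruences --- a welcome bit of extra care. Your closing regular-value paragraph is a genuine alternative route the paper does not pursue, and your remark about the normal sign is apt.
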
}

The chosen approach to the two theorems separates the
homotopy theory from the geo\-metry. Let me illustrate this
by explaining how the well-known description of the Burnside
rings of prime order groups can be deduced using these
methods.

{\bf Example~3.} Let~$G$ be a prime order group, and let~$M$
be a point. Then~$M_+$ and~$S^W$ are both 0-dimensional
spheres.  While the~$G$-actions on~$M$ is trivial, of
course, it will no longer be after stabilisation with a
suitable oriented~$G$-representation. Let us assume that
this has been done without changing the notation. The
group~$\{S^0,S^0\}^G$ maps via the forgetful map to the
group~$\{S^0,S^0\}=\Zset$.  On the other hand, it maps to
the group~$\{(S^0)^G,(S^0)^G\}=\Zset$ via the fixed point
map.  The product
\begin{displaymath}
  \{S^0,S^0\}^G\longrightarrow\Zset\oplus\Zset
\end{displaymath}
of the two maps is the ghost map. It is injective, and the
image has index~$|G|$.  The image is not yet determined by
this. There are still~$|G|+1$ choices. In order to give a
description of the image, one needs to put in some new
information. One can use the fact that the identity of~$S^0$
is~$G$-equivariant. It is mapped to the pair~$(1,1)$. This
determines the image, which consists of those pairs~$(x,y)$
in~$\Zset\oplus\Zset$ which satisfy the relation~$x\equiv
y\mod |G|$.  \hfill\qedsymbol

The homotopy theory for Theorem~\ref{thm:structure} is done
in Sections \ref{sec:3} to \ref{sec:7}. The method of proof
is to deal with the~$G$-trivial space~$M^G$ and with
the~$G$-space~$M/M^G$, which is free away from the base
point, separately. The latter uses a result from
Section~\ref{sec:comparison} which sometimes allows for a
comparison of the group of~$G$-maps out of a free~$G$-space
with the group of ordinary maps out of the quotient in the
case when~$G$ does {\it not} act trivially on the target. In
the final Section~\ref{sec:examples}, the geometry of the
situation is used to construct some maps whose images under
the ghost map are easy to determine, leading to
Theorem~\ref{thm:congruences}.
 
\subsection*{Acknowledgments} 

I would like to thank a referee
of an earlier version for her or his stimulating report. Part of this work has been supported by the SFB 343 at the University of Bielefeld.


\section{Stability}\label{sec:stability}

In this section, we will prove the stability of the Hopf problem, and
the proof given will also show that the semi-stable universe is the
appropriate context in which to work. 

Let us recall the classical situation. If~$M$ is an
oriented~$n$-manifold, by Freudenthal's suspension theorem, the
stabilisation map
\begin{displaymath}
  [M_+,S^n]\longrightarrow\{M_+,S^n\}
\end{displaymath}
is bijective if~$n\geqslant 2$. In the equivariant
situation, we may use the equivariant extension of
Freudenthal's theorem due to H. Hauschild, see
\cite{Hauschild:Suspension} (or~\cite{Adams} for an
exposition in English). This implies that the suspension map
\begin{displaymath}
  [X,Y]^G\longrightarrow[\Sigma^VX,\Sigma^VY]^G
\end{displaymath}
is bijective for finite based $G$-CW-complexes $X$ and $Y$,
if two conditions are satisfied: the connectivity of $Y^H$
has to be at least $\dim(X^H)/2$ for all subgroups~$H$
of~$G$ such that~\hbox{$V^H\not=0$}, and the connectivity of
$Y^K$ has to be at least $\dim(X^H)+1$ for all
subgroups~$K<H$ of~$G$ such that~$V^H\not=V^K$. This will
now be used to prove Theorem~\ref{thm:stability}.

\begin{proof}
  The first condition corresponds to Freudenthal's condition
  in the non-equivariant case. It is therefore satisfied for
  all~$H$ -- and for all~$V$ -- in our situation~$X=M_+$
  and~$Y=S^W$ as long as the dimension of~$M^G$ is at
  least~$2$.

  The other condition refers to a genuinely equivariant
  phenomenon.  Suppose first that~$K=1$ and~$H$ is a
  non-trivial subgroup of~$G$. Then~$M^H=M^G$ since~$M$ is
  semi-free, and the condition is satisfied for these~$K$
  and~$H$ -- and for all~$V$ -- if and only if the
  codimension of~$M^G$ in~$M$ is at least~$2$.

  If~$K$ and~$H$ are non-trivial subgroups of~$G$ such
  that~$V^H\not=V^K$, the
  condition~$\dim(M^G)\leqslant\dim(W^G)-2$ needs to be
  satisfied, which does not seem to be the case. However, we
  always have~$V^H=V^G=V^K$ if~$V$ is semi-free, so this
  does not occur, and the proof is finished.
\end{proof}

There is a change-of-universe natural transformation
from~\hbox{$\{X,Y\}^G_{\mathrm{\mathrm{sf}}}$} to the corresponding
group with respect to a complete~$G$-universe,
see~\cite{Lewis:Change}. However, the preceding proof shows that one
should not expect it to be bijective. It is clearly bijective if the
order of~$G$ is a prime, since semi-free universes are complete for
such~$G$. In a similar vein, the proof above can easily be adapted to
show that the stabilisation map from $[M_+,S^W]^G$ to the
corresponding group of stable $G$-maps with respect to a complete
$G$-universe is always bijective if $M$ satisfies a suitable gap
hypothesis: The fixed point space $M^G$ has to be at least
$2$-dimensional, and the codimension of $M^H$ in $M^K$ has to be at
least $2$ for all subgroups $K<H$ of $G$.


\section{A comparison result}\label{sec:comparison}
 
Let~$G$ be a finite group. A based~$G$-space is called {\it
free} if the group~$G$ acts freely on the complement of the
base point (which is fixed by $G$). Let~$F$ be a finite free
based~$G$-CW-complex. Since~$G$ is finite, this also yields
an ordinary CW-structure on~$F$. And it gives an ordinary
CW-structure on the quotient~$Q=F/G$.

If~$Y$ is a~$G$-space, the group~$\{F,Y\}^G_{\mathrm{sf}}$
is not obviously related to~$\{Q,Y\}$ since the~$G$-action
on~$Y$ need not be trivial. If~$Y$ were a trivial~$G$-space,
there would be a tautological map from~$\{Q,Y\}$
to~$\{Q,Y\}^G_{\mathrm{sf}}$. One could then use a semi-free
version of (5.3) from~\cite{Adams}, which says that in this
case the composition~\hbox{$\{Q,Y\}\rightarrow
\{Q,Y\}^G_{\mathrm{sf}} \rightarrow\{F,Y\}^G_{\mathrm{sf}}$}
with the map induced by the projection from~$F$ to~$Q$ would
be an isomorphism. However, since the action on~$Y$ is
non-trivial, the arrow on the left is not
defined. Nevertheless, there sometimes is a way to
compare~$\{F,Y\}^G_{\mathrm{sf}}$ with~$\{Q,Y\}$. This will
be explained now.

By mapping the CW-filtrations into~$Y$, one gets three
spectral sequences, which converge
to~$\{F,Y\}^G_{\mathrm{sf}}$,~$\{F,Y\}$ and~$\{Q,Y\}$,
respectively.

For an integer~$s$, let~$I(s)$ denote the (finite) set
of~$s$-dimensional~$G$-cells in~$F$. The filtration gives
rise to a spectral sequence
\begin{equation}\label{1st_ss} 
  E_1^{s,t}=\bigl\{\bigvee_{I(s)}G_+,\Sigma^tY\bigr\}^G_{\mathrm{sf}}
  \Longrightarrow \{F,\Sigma^{s+t}Y\}^G_{\mathrm{sf}}.
\end{equation} 
In order to compute the groups on the~$E_2$-page, the
differentials on the~$E_1$-page need to be
discussed. The~$t$-th row is obtained by applying the
functor~$\{?,\Sigma^tY\}^G_{\mathrm{sf}}$ to
the~$G$-cellular complex
\begin{displaymath} 
  *\longleftarrow \bigvee_{I(0)}G_+\longleftarrow 
  \bigvee_{I(1)}G_+\longleftarrow\dots\longleftarrow 
  \bigvee_{I(n)}G_+\longleftarrow * 
\end{displaymath} 
of~$F$. 
Note that 
\begin{equation}\label{matrix_ref} 
  \bigl\{\bigvee_{I(s)}G_+,\Sigma^tY\bigr\}^G_{\mathrm{sf}} \cong 
  \bigoplus_{I(s)}\bigl\{G_+,\Sigma^tY\bigr\}^G_{\mathrm{sf}}, 
\end{equation} 
and that the groups~$\{G_+,\Sigma^tY\}^G_{\mathrm{sf}}$ are
right modules over~$\{G_+,G_+\}^G_{\mathrm{sf}}$ via
composition. Because of~(\ref{matrix_ref}), the
differentials can be identified with matrices, and the
entries are given by right multiplication with elements
of~$\{G_+,G_+\}^G_{\mathrm{sf}}$.  Therefore, let me pause
to discuss this action in more detail.
 
Sending an element~$g$ of~$G$ to the~$G$-map~$G_+\rightarrow
G_+$ which sends an element~$x$ to~$xg$ induces an
isomorphism of the group ring~$\Zset G$
with~$\{G_+,G_+\}^G_{\mathrm{sf}}$ which reverses the order
of the
multiplication. Therefore,~$\{G_+,\Sigma^tY\}^G_{\mathrm{sf}}$
is a left~$\Zset G$-module. As such, it is isomorphic with
the left~$\Zset G$-module~$\{S^0,\Sigma^tY\}$,
the~$G$-action being induced by the action on~$Y$. (The
adjunction isomorphism (5.1) in~\cite{Adams} is~$G$-linear.)
This finishes the digression on the~$\Zset G$-module
structures.
 
One can now try to compare the spectral
sequence~(\ref{1st_ss}) to the one
\begin{equation}\label{2nd_ss} 
  E_1^{s,t}=\bigl\{\bigvee_{I(s)}S^0,\Sigma^tY\bigr\} 
  \Longrightarrow
  \{Q,Y\}^{s+t}
\end{equation} 
obtained by the induced CW-filtration of~$Q$. (This is of course the
Atiyah-Hirze\-bruch spectral sequence for~$Y$-cohomology.) As mentioned above, there is no
reasonable map between the targets in sight, and~I cannot offer a map
of spectral sequences. However, note that the groups on
the~$E_1$-pages are always
isomorphic:~\hbox{$\{G_+,\Sigma^tY\}^G_{\mathrm{sf}}
\cong\{S^0,\Sigma^tY\}$}, again by the adjunction isomorphism~(5.1)
in~\cite{Adams}. As for the differentials, the following is true.

\begin{prop}\label{prop:differentials}
  If~$\{S^0,\Sigma^tY\}$ is a trivial~$\Zset G$-module,
  the~$t$-rows on the~$E_1$-terms of the spectral sequences
  {\upshape (\ref{1st_ss})} and {\upshape (\ref{2nd_ss})}
  are isomorphic as complexes. In particular, the groups on
  the~$t$-rows of the~$E_2$-pages are isomorphic.
\end{prop}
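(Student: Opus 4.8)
The plan is to compare the two $E_1$-chain complexes row by row, using the $\Zset G$-module identifications already established in the digression above. Fix a value of $t$ and write $A = \{S^0, \Sigma^t Y\}$, which by hypothesis is a trivial $\Zset G$-module. The $t$-th row of the first spectral sequence is the complex obtained by applying $\{?,\Sigma^t Y\}^G_{\mathrm{sf}}$ to the $G$-cellular chain complex of $F$ with terms $\bigvee_{I(s)} G_+$; by~(\ref{matrix_ref}) and the identification of $\{G_+,\Sigma^t Y\}^G_{\mathrm{sf}}$ with $A$ as a $\Zset G$-module, this row is isomorphic to $\mathrm{Hom}_{\Zset G}(C_*(F), A)$, where $C_*(F)$ denotes the cellular chain complex of $F$ as a complex of free $\Zset G$-modules (one free summand per $G$-cell). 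Similarly, the $t$-th row of~(\ref{2nd_ss}) is $\mathrm{Hom}_{\Zset}(C_*(Q), A)$, and since $F \to Q$ is the quotient by a free $G$-action, the cellular chain complex $C_*(Q)$ is canonically $C_*(F) \otimes_{\Zset G} \Zset$, i.e. $C_*(F)$ with each free $\Zset G$-summand replaced by a single copy of $\Zset$.

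The key step is then the purely algebraic observation that, because $A$ carries the trivial $G$-action, the adjunction
\begin{displaymath}
  \mathrm{Hom}_{\Zset G}\bigl(C_*(F), A\bigr) \;\cong\; \mathrm{Hom}_{\Zset}\bigl(C_*(F)\otimes_{\Zset G}\Zset,\, A\bigr) \;=\; \mathrm{Hom}_{\Zset}\bigl(C_*(Q), A\bigr)
\end{displaymath}
holds as an isomorphism of cochain complexes, naturally in the incidence data. Concretely, a $G$-map $\bigvee_{I(s)} G_+ \to \Sigma^t Y$ is the same as a function $I(s) \to A$ once one fixes the preferred generators $g \mapsto (x \mapsto xg)$, and a cellular map $\bigvee_{I(s)} S^0 \to \Sigma^t Y$ is also a function $I(s) \to A$; under these identifications the two differentials agree because each entry of the first differential is right multiplication by an element $\sum n_g g \in \Zset G \cong \{G_+,G_+\}^G_{\mathrm{sf}}$, which acts on the trivial module $A$ simply as multiplication by the augmentation $\sum n_g$ — and this augmentation is exactly the incidence number appearing in the corresponding entry of the differential for $C_*(Q)$. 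Taking cohomology then gives the isomorphism of $E_2$-rows.

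I would carry this out in the order: (i) unwind the $t$-th row of~(\ref{1st_ss}) as $\mathrm{Hom}_{\Zset G}(C_*(F),A)$ using~(\ref{matrix_ref}) and the module identification; (ii) identify $C_*(Q)$ with $C_*(F)\otimes_{\Zset G}\Zset$ from the free $G$-CW structure; (iii) invoke the adjunction displayed above, valid because $A$ is $G$-trivial; (iv) check that this adjunction is compatible with the two differentials by the augmentation computation; (v) pass to cohomology. The main obstacle — and the only place any care is needed — is step~(iv): one must verify that the matrix entries of the differential in~(\ref{1st_ss}), which the text says are given by right multiplication with elements of $\{G_+,G_+\}^G_{\mathrm{sf}}$, really do reduce under the trivial-module hypothesis to the ordinary cellular incidence numbers of $Q$. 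This is essentially the statement that the augmentation $\Zset G \to \Zset$ intertwines the $G$-equivariant attaching maps of $F$ with the attaching maps of the quotient $Q$, which is standard but should be spelled out to make the identification of complexes, not merely of graded groups, convincing.
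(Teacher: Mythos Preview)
Your proposal is correct and follows essentially the same approach as the paper. Both arguments hinge on the single observation that the matrix entries of the differential in~(\ref{1st_ss}) are elements of~$\Zset G$ which, on a trivial module, act via the augmentation~$\epsilon\colon g\mapsto 1$, and that this augmentation is precisely what relates the attaching data of~$F$ to that of~$Q$; you package this as the adjunction $\mathrm{Hom}_{\Zset G}(C_*(F),A)\cong\mathrm{Hom}_{\Zset}(C_*(F)\otimes_{\Zset G}\Zset,A)$, while the paper states it directly in terms of matrix entries, but the content is identical.
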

 
\begin{proof}
  The differentials on the~$E_1$-page of the spectral
  sequence~(\ref{2nd_ss}) are obtained by applying the
  functors~$\{?,\Sigma^tY\}$ to the cellular complex
  \begin{displaymath} 
    *\longleftarrow \bigvee_{I(0)}S^0\longleftarrow 
    \bigvee_{I(1)}S^0\longleftarrow \dots\longleftarrow 
    \bigvee_{I(n)}S^0\longleftarrow * 
  \end{displaymath} 
  of~$Q$. As for~(\ref{1st_ss}), they can be thought of as
  matrices.  This time, the entries are obtained from the
  entries of those in~(\ref{1st_ss}) by passage to
  quotients, i.e. by applying the
  map~$\epsilon:\{G_+,G_+\}^G_{\mathrm{sf}}
  \rightarrow\{S^0,S^0\},\;g\mapsto1$. This means that the
  diagram
  \begin{center}
    \mbox{ 
      \xymatrix{
        \{\bigvee_{I(s)}G_+,\Sigma^tY\}^G_{\mathrm{sf}}\ar[r] & 
        \{\bigvee_{I(s+1)}G_+,\Sigma^tY\}^G_{\mathrm{sf}}\\ 
        \{\bigvee_{I(s)}S^0,\Sigma^tY\}\ar[r]\ar[u]^{\cong} & 
        \{\bigvee_{I(s+1)}S^0,\Sigma^tY\}\ar[u]_{\cong} 
      }
    } 
  \end{center}  
  (obtained from the isomorphisms above and the
  differentials) is only commutative if the elements
  in~$\Zset G$ which appear in the matrix of the top arrow
  act via~$\epsilon$. While at first sight it seems that one
  needs to know the details of the~$G$-CW-structure to
  proceed, this is not the case: if~$\{S^0,\Sigma^tY\}$ is a
  trivial~$\Zset G$-module, the condition is fulfilled for
  all elements of~$\Zset G$.
\end{proof}

In nice situations, this result implies
that~$\{F,Y\}^G_{\mathrm{sf}}$ and~$\{Q,Y\}$ are in fact
isomorphic. One of these situations will be encountered in
the following section.
 

\section{Free points of spheres}\label{sec:3}
 
Let~$W$ be a orientable real~$G$-representation which in
non-trivial and semi-free. The results of the previous
section will now be applied to the
free~$G$-space~\hbox{$F=S^W/S^{W^G}$} and~$Y=S^W$. Let~$n$
and~$n^G$ be the real dimensions of~$S^W$ and~$S^{W^G}$,
respectively. Note that the number~$n-n^G$ is positive and
even.
 
\begin{prop}
  The three groups
  \begin{displaymath}
  \{S^W/S^{W^G},S^W\},\quad\{(S^W/S^{W^G})/G,S^W\},\quad\text{ and
  }\quad\{S^W/S^{W^G},S^W\}^G_{\mathrm{sf}}
  \end{displaymath}
  are all free abelian on one generator.
\end{prop}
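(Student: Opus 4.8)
The plan is to pin down the homotopy type of $F=S^W/S^{W^G}$ and of its orbit space $Q=F/G$, after which the first two groups drop out by inspection and the third is handled by the comparison of Section~\ref{sec:comparison}. Write $W=W^G\oplus W'$ for the $G$-invariant orthogonal splitting, so $W'$ has no nonzero fixed vector. Since $W$ is oriented, non-trivial and semi-free, $G$ acts on $W'$ through orientation-preserving isometries with trivial isotropy away from the origin, and $\dim W'=n-n^G$ is positive and even, say $n-n^G=2k$. Hence the unit sphere $S(W')$ is a closed connected oriented $(2k-1)$-manifold carrying a free $G$-action, and so is the orbit manifold $S(W')/G$. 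Realising $S^{W'}$ as the unreduced suspension of $S(W')$ with cone points the origin and the point at infinity identifies $S^{W'}/S^0$ with $\Sigma S(W')_+$, where $S^0=\{0,\infty\}$; combined with $S^W=S^{W^G}\wedge S^{W'}$ and the fact that inside this smash product the fixed sphere is $S^{W^G}\wedge S^0$, one gets
\[
  F=S^{W^G}\wedge(S^{W'}/S^0)\simeq_G\Sigma^{n^G+1}S(W')_+,\qquad Q=F/G\simeq\Sigma^{n^G+1}\bigl(S(W')/G\bigr)_+ .
\]
In particular $F$ is, up to $G$-homotopy, a finite free $G$-CW complex with cells only in dimensions $n^G+1,\dots,n$.

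For the first group, forgetting the $G$-action, the inclusion $S^{W^G}\hookrightarrow S^W$ is null-homotopic (as $n^G<n$), so $F\simeq S^n\vee S^{n^G+1}$ and $\{F,S^W\}\cong\{S^n,S^n\}\oplus\{S^{n^G+1},S^n\}\cong\Zset$, the second summand vanishing since $n^G+1<n$. For the second group, $\{Q,S^W\}\cong\{(S(W')/G)_+,S^{2k-1}\}$, which is infinite cyclic by the classical Hopf theorem applied to the closed connected oriented $(2k-1)$-manifold $S(W')/G$; a generator is the stable class of a degree-one collapse map.

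For the third group I would invoke Proposition~\ref{prop:differentials} with $Y=S^W$. Its hypothesis holds for every $t$: the $\Zset G$-module $\{S^0,\Sigma^tS^W\}$ is trivial, because $W$ is oriented, so each $g\in G$ acts on $S^W\cong S^n$ by an orientation-preserving self-homeomorphism, hence by a self-map of degree one, hence by a map non-equivariantly homotopic to the identity. Therefore the spectral sequences~(\ref{1st_ss}) and~(\ref{2nd_ss}) have isomorphic $E_2$-pages, row by row. Both are concentrated in the columns $n^G+1\le s\le n$ (the cells of $F$, respectively of $Q$) and in the rows $t\le-n$ (as $\{S^0,\Sigma^tS^W\}\cong\{S^{-t},S^n\}$ vanishes once $-t<n$), so $(s,t)=(n,-n)$ is the unique bidegree with $s+t=0$; moreover no $d_r$ with $r\ge2$ affects it — a $d_r$ out of it lands in a zero column, and a $d_r$ into it starts in a zero row — so $E_\infty^{n,-n}=E_2^{n,-n}$ and there is no extension problem. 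The two entries $E_2^{n,-n}$ coincide by the row-by-row isomorphism above; hence $\{F,S^W\}^G_{\mathrm{sf}}\cong E_2^{n,-n}\cong\{Q,S^W\}\cong\Zset$.

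The step I expect to be the main obstacle is precisely this last comparison: there is no map of spectral sequences between~(\ref{1st_ss}) and~(\ref{2nd_ss}), so one must argue that agreement of $E_2$-pages forces agreement of the abutments, and this goes through only because $F$ lies between the two sphere dimensions $n^G+1$ and $n$, leaving no room for higher differentials or extension ambiguities in total degree zero. The single place where the standing hypotheses are genuinely used is the verification that the coefficient modules $\{S^0,\Sigma^tS^W\}$ carry the trivial $G$-action, which is the orientation assumption on $W$ in disguise; everything else is bookkeeping with the Atiyah-Hirzebruch-type spectral sequences of Section~\ref{sec:comparison} and the ordinary Hopf theorem.
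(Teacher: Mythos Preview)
Your argument is correct. For the first and third groups it matches the paper's proof: the same wedge splitting of $S^W/S^{W^G}$, and the same invocation of Proposition~\ref{prop:differentials} together with the observation that the only contributing bidegree on the line of total degree zero is the corner spot, which supports no higher differentials and no extensions. (Your indexing $(s,t)=(n,-n)$ differs from the paper's $(n,0)$ only by a sign convention in the grading of the coefficient groups.)

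Where you genuinely diverge is the second group. The paper computes $\{Q,S^W\}\cong\H^n(Q;\Zset)$ via the spectral sequence $\H^s(G;\H^t(F;\Zset))\Rightarrow\H^{s+t}(Q;\Zset)$, arguing that the differentials between the two nonzero rows must be isomorphisms by a dimension count on $Q$. You instead identify $Q$ explicitly as $\Sigma^{n^G+1}(S(W')/G)_+$ and read off $\{Q,S^W\}\cong\{(S(W')/G)_+,S^{2k-1}\}\cong\Zset$ from the ordinary Hopf theorem applied to the closed connected oriented $(2k{-}1)$-manifold $S(W')/G$. This is more direct and more geometric. The paper's route has the incidental payoff that the edge homomorphism immediately exhibits the quotient map $F\to Q$ as inducing multiplication by $|G|$ on top cohomology---a fact used in the next proposition---but this is equally transparent in your picture, since the covering $S(W')\to S(W')/G$ has degree $|G|$.
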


\begin{proof}
  As for the group~$\{S^W/S^{W^G},S^W\}$: Since
  the~$G$-representation~$W$ is non-trivial, one knows that
  the quotient~$S^W/S^{W^G}$ is non-equivariantly equivalent
  to a wedge~\hbox{$S^n\vee S^{n^G+1}$}.  The map from the
  group~$\{S^W/S^{W^G},S^W\}$ to~$\{S^W,S^W\}=\Zset$ induced
  by the collapse map is an isomorphism. Note
  that~$\{S^W/S^{W^G},S^W\}$ is isomorphic
  to~$\H^n(S^W/S^{W^G};\Zset)$ via the Hurewicz map.
  
  As for~$\{(S^W/S^{W^G})/G,S^W\}$: Also via the Hurewicz
  map, this group is isomorphic to the
  group~$\H^n((S^W/S^{W^G})/G;\Zset)$.  Therefore, one can
  use the spectral sequence
  \begin{equation}\label{hypercohomology} 
    E_2^{s,t}=\H^s(G;\H^t(S^W/S^{W^G};\Zset))
    \Longrightarrow 
    \H^{s+t}((S^W/S^{W^G})/G;\Zset) 
  \end{equation} 
  for that. From the stable homotopy type of~$S^W/S^{W^G}$
  it follows that the only non-trivial groups on
  the~$E_2$-page are in two rows:~$t=n^G+1$ and~$t=n$. Each
  of these contains the cohomology of the group~$G$ with
  coefficients in the trivial~$G$-module~$\Zset$.
  
  Thus there is only one page on which non-trivial
  differentials may occur. Since the dimension
  of~$(S^W/S^{W^G})/G$ is at most~$n=\dim_\Rset(W)$, all
  differentials between non-trivial groups must be
  non-trivial. This determines the~$E_{\infty}$-page. There
  are no extension problems. It follows
  that~\hbox{$\{(S^W/S^{W^G})/G,S^W\}\cong\Zset$}. But,
  notice the edge homomorphism of the spectral sequence,
  which is induced by the quotient map from~$S^W/S^{W^G}$
  to~$(S^W/S^{W^G})/G$. It can immediately be read off that
  the generator of~$\H^n((S^W/S^{W^G})/G;\Zset)$ is mapped
  to~$|G|$ times the generator
  of~$\H^n(S^W/S^{W^G};\Zset)$. Of course, this just reminds
  us that the quotient map has degree~$|G|$.
  
  As for~$\{S^W/S^{W^G},S^W\}^G_{\mathrm{sf}}$,
  Proposition~\ref{prop:differentials} from the previous
  section can be used. The assumption on the action is
  satisfied for~$Y=S^W$ and all~$t$ since the~$G$-action
  on~$W$ preserves the orientation.  We can thus deduce some
  of the groups on the~$E_2$-page of the spectral
  sequence~(\ref{1st_ss}) from the previously calculated
  groups on the~$E_2$-page of the spectral
  sequence~(\ref{2nd_ss}): the
  groups~$\H^s((S^W/S^{W^G})/G\,;\pi^t(S^W))$ vanish
  if~\hbox{$s>n$} or~\hbox{$t>0$}, and hence the only
  pair~$(s,t)$ with~\hbox{$s+t=n$} and~$E_2^{s,t}\neq0$ is
  the pair~\hbox{$(s,t)=(n,0)$}.  The corresponding
  group~$\H^n((S^W/S^{W^G})/G\,;\Zset)$ has been shown to be
  isomorphic to~$\Zset$. There are no non-trivial
  differentials into and out of it, so it survives
  to~$E_{\infty}$.  There are no extension problems.
\end{proof}

Now that the structure of those three groups is known, it is
desirable to know the maps between them. The proof of the
preceding proposition shows that the map
\begin{displaymath}
  \{(S^W/S^{W^G})/G,S^W\}\rightarrow\{S^W/S^{W^G},S^W\}
\end{displaymath}
is injective with cyclic cokernel of order~$|G|$. The same
holds for the forgetful map:
 
\parbox{\linewidth}{\begin{prop}\label{prop:forgetfulmap}
  The forgetful map
  \begin{displaymath} 
    \{S^W/S^{W^G},S^W\}^G_{\mathrm{sf}}\longrightarrow\{S^W/S^{W^G},S^W\} 
  \end{displaymath} 
  is injective and has a cyclic cokernel of order~$|G|$.
\end{prop}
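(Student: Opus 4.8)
The plan is to re-run the spectral-sequence comparison that proved the preceding proposition, this time tracking what happens over the common target $\{S^W/S^{W^G},S^W\}$. Write $F=S^W/S^{W^G}$ and $Q=F/G$, and let $p\colon F\to Q$ be the (cellular) quotient map. From the common skeletal filtration of $F$ one obtains, in addition to~(\ref{1st_ss}) (converging to $\{F,S^W\}^G_{\mathrm{sf}}$) and~(\ref{2nd_ss}) (converging to $\{Q,S^W\}$), the spectral sequence of Section~\ref{sec:comparison} converging to $\{F,S^W\}$. The forgetful natural transformation induces a map from~(\ref{1st_ss}) to this last spectral sequence, and the cellular map $p$ induces a map from~(\ref{2nd_ss}) to it. I do not claim a direct map between~(\ref{1st_ss}) and~(\ref{2nd_ss}) --- as explained in Section~\ref{sec:comparison} there is none --- but Proposition~\ref{prop:differentials} applies here, since the $G$-action on $S^W$ is orientation preserving and hence $\{S^0,\Sigma^tS^W\}$ is a trivial $\Zset G$-module for every~$t$; so the $E_2$-pages of~(\ref{1st_ss}) and~(\ref{2nd_ss}) are identified.

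First I would recall, exactly as in the previous proof, that in the total degree computing these groups each of the three spectral sequences has a single surviving term. Indeed, $\pi^t(S^W)$ vanishes for $t>0$ and the cochain complexes involved are at most $n$-dimensional (with $n$ the real dimension of $S^W$), so the only contributing bidegree is $(n,0)$, it equals $E_2=E_\infty$, and there is no extension problem. The surviving term is $\H^n(Q;\Zset)$ for both~(\ref{1st_ss}) and~(\ref{2nd_ss}) and $\H^n(F;\Zset)$ for the spectral sequence converging to $\{F,S^W\}$; all three are infinite cyclic by the previous proposition, and the edge maps identify them with $\{F,S^W\}^G_{\mathrm{sf}}$, $\{Q,S^W\}$ and $\{F,S^W\}$.

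Next I would read the two maps of spectral sequences off on these terms. The $p$-induced map is, on $E_2^{n,0}$, simply $p^*\colon\H^n(Q;\Zset)\to\H^n(F;\Zset)$, which by the computation in the proof of the previous proposition takes a generator to $|G|$ times a generator. For the forgetful map, Proposition~\ref{prop:differentials} identifies the relevant row of~(\ref{1st_ss}) with the cellular cochain complex $C^\bullet(Q;\Zset)$, in the very same way that it identifies the corresponding row of~(\ref{2nd_ss}); under these identifications the forgetful map to the spectral sequence for $\{F,S^W\}$ is, row by row, the ``diagonal'' cochain map $C^\bullet(Q;\Zset)\to C^\bullet(F;\Zset)$ sending the generator indexed by a $G$-cell of $F$ to the sum of the generators indexed by the $|G|$ ordinary cells of $F$ lying over it --- that is, it is the cochain map induced by $p$. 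Since the $E_2$-comparison isomorphism of Proposition~\ref{prop:differentials} is, on the surviving term, the obvious identity of $\H^n(Q;\Zset)$, the forgetful map therefore agrees on surviving terms with $p^*$. After the edge identifications this makes the forgetful map $\{F,S^W\}^G_{\mathrm{sf}}\to\{F,S^W\}$ into multiplication by $|G|$ on $\Zset$, hence injective with cyclic cokernel of order $|G|$, as asserted.

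The only point demanding care is the compatibility just used: that the forgetful map of spectral sequences, the $E_2$-comparison isomorphism, and the $p$-induced map of spectral sequences form a commuting triangle on the relevant $E_1$-row. But this triangle already commutes at the level of $E_1$, because all three arrows are built from the single adjunction isomorphism $\{G_+,\Sigma^tS^W\}^G_{\mathrm{sf}}\cong\{S^0,\Sigma^tS^W\}$ of~\cite{Adams}: once the $\Zset G$-action on $\{S^0,\Sigma^tS^W\}$ is known to be trivial, a $G$-map out of $G_+$ restricts over the $|G|$ non-equivariant points of $G_+$ to the constant tuple, so both the forgetful arrow and the $p$-induced arrow are the diagonal. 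Everything else is the bookkeeping already carried out for the preceding proposition.
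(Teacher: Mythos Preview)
Your argument is correct and takes a genuinely different route from the paper. The paper argues via the cofibre sequence $S^{W^G}\to S^W\to S^W/S^{W^G}$: any equivariant class coming from $S^W/S^{W^G}$ restricts to zero on the fixed sphere, and a separate Borel-cohomology proposition then forces its underlying degree to be divisible by~$|G|$. You instead stay entirely within the spectral-sequence framework of Section~\ref{sec:comparison} and the previous proposition, and your key observation---that under the adjunction $\{G_+,\Sigma^tS^W\}^G_{\mathrm{sf}}\cong\{S^0,\Sigma^tS^W\}$ the forgetful map becomes the diagonal, hence coincides with $p^*$ on $E_1$ once the $\Zset G$-action is trivial---is exactly right. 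This buys you something: you compute the forgetful map \emph{on the nose} as multiplication by~$|G|$, whereas the paper's argument as written only yields that $|G|$ divides the degree of the map and leaves the reverse divisibility (and hence injectivity and the exact order of the cokernel) to be supplied, say by a transfer argument. The paper's approach, on the other hand, avoids the bookkeeping of identifying three spectral sequences and isolates a cleanly quotable lemma about degrees of equivariant self-maps of spheres.
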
}

\begin{proof} 
  Let us contemplate the following diagram. 
  \begin{center}
    \mbox{ 
      \xymatrix{
        \{S^{W^G},S^W\}^G_{\mathrm{sf}} & 
	\{S^W,S^W\}^G_{\mathrm{sf}}\ar[d]\ar[l] & 
        \{S^W/S^{W^G},S^W\}^G_{\mathrm{sf}}\ar[d]\ar[l] \\ & 
        \{S^W,S^W\} & 
        \{S^W/S^{W^G},S^W\}\ar[l]_-{\cong} 
      }
    } 
  \end{center}
  The horizontal maps are induced by the obvious cofibre
  sequence.  Therefore, the top row is exact. The vertical
  maps are the forgetful maps and make the diagram
  commute. Now if one picks a map
  in~$\{S^W/S^{W^G},S^W\}^G_{\mathrm{sf}}$, its image
  in~$\{S^W,S^W\}^G_{\mathrm{sf}}$ has degree zero when
  restricted to the fixed point spheres. It follows that the
  degree of the image itself is a multiple of~$|G|$ by
  following proposition.
\end{proof}  


\begin{prop}
  Let~$f\colon S^W\rightarrow S^W$ be~$G$-equivariant for
  some finite group~$G$. If the degree of the restriction
  of~$f$ to the fixed sphere is zero, then the degree of~$f$
  itself is divisible by the order of~$G$.
\end{prop}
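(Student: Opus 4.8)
The plan is to peel off the fixed sphere using the ordinary Hopf theorem, reducing the problem to a $G$-map out of $S^W/S^{W^G}$, and then to extract the divisibility by $|G|$ from Borel equivariant cohomology. For the reduction, we may assume $W$ is non-trivial (otherwise the fixed sphere is all of $S^W$ and there is nothing to prove), and we use that $W$ is oriented and semi-free. Put $n=\dim_\Rset W$ and let $j\colon S^{W^G}\hookrightarrow S^W$ be the inclusion of the fixed sphere, noting that the basepoint can be taken in $S^{W^G}$. By equivariance $f$ restricts on $S^{W^G}$ to $f^G$, which has degree zero and is hence null-homotopic by Hopf's theorem (the case $\dim S^{W^G}=0$ being trivial), and since $S^{W^G}$ carries the trivial action this is a $G$-null-homotopy. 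As $j$ is a $G$-cofibration, $f$ is $G$-homotopic to a map $f'$ with $f'\circ j$ constant, and $f'$ then factors as $f'=\bar f\circ c$ through the collapse $c\colon S^W\to S^W/S^{W^G}=:C$, for some $G$-map $\bar f\colon C\to S^W$. As the codimension of $S^{W^G}$ is at least $2$, the cofibre sequence $S^{W^G}\to S^W\to C$ shows that $c$ induces an isomorphism on $\H^n(-;\Zset)$, so $\deg f=\deg f'=\pm\deg\bar f$, where $\deg\bar f\in\Zset$ describes the action of $\bar f$ from $\H^n(C;\Zset)\cong\Zset$ to $\H^n(S^W;\Zset)\cong\Zset$. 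It therefore suffices to prove that $|G|$ divides $\deg\bar f$ for an arbitrary $G$-map $\bar f\colon C\to S^W$.

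For that I would pass to Borel equivariant cohomology $\H^*_G(X)=\H^*(EG_+\wedge_G X)$ and exploit the naturality of the forgetful maps $\H^*_G(X)\to\H^*(X)$, which are restriction along the fibre inclusion $X\hookrightarrow X_{hG}$. Applying the Borel construction to $\bar f$ yields a commutative square with horizontal maps $\bar f^*$ and vertical maps the forgetful maps, for $X=S^W$ and for $X=C$. On the target side, $(S^W)_{hG}$ is the Thom space of the oriented rank-$n$ bundle $EG\times_G W\to BG$ — orientability using that $G$ preserves the orientation of $W$ — so by the Thom isomorphism $\widetilde{\H}^n_G(S^W)\cong\H^0(BG;\Zset)=\Zset$, generated by the Thom class $u$, and the forgetful map carries $u$ to a generator of $\widetilde{\H}^n(S^W)$. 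On the source side, semi-freeness makes $C$ free away from its basepoint, so $EG_+\wedge_G C\simeq C/G$, and the forgetful map $\widetilde{\H}^n_G(C)=\widetilde{\H}^n(C/G)\to\widetilde{\H}^n(C)$ is the one induced by the quotient $p\colon C\to C/G$; since $p$ is a $|G|$-fold covering away from the basepoint it has degree $|G|$, so on these infinite cyclic groups it is multiplication by $|G|$ (this is just the computation already recorded in the proof above).

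Chasing the Thom class $u$ around the square then finishes the proof: one way it is sent to $(\deg\bar f)$ times a generator of $\widetilde{\H}^n(C)$, the other way it is sent into $|G|\cdot\widetilde{\H}^n(C)$, whence $|G|\mid\deg\bar f$, and therefore $|G|\mid\deg f$. The step I expect to cause the most trouble is a conceptual one: a transfer argument does not suffice, since composing $\bar f$ with the transfer $C/G\to C$ only recovers the tautology $|G|\deg\bar f\in|G|\Zset$, so one really is forced to use equivariant cohomology in order to exploit that the forgetful group is ``smaller'' in top degree; and within that argument the point requiring care is precisely that the forgetful map $\widetilde{\H}^n_G(C)\to\widetilde{\H}^n(C)$ is multiplication by $|G|$, which is exactly where the freeness of the action on $C$, that is, the semi-freeness of $W$, is used.
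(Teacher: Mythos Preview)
Your argument is correct. Both your proof and the paper's use Borel cohomology, but they are organised differently. The paper works directly with $f$ and with coefficients in $\Zset/|G|$: writing $b^*(X)=\H^*(EG_+\wedge_GX;\Zset/|G|)$, one has $b^*(S^{W^G})$ and $b^*(S^W)$ free of rank one over $b^*$ by the suspension and Thom isomorphisms, and the inclusion $S^{W^G}\subseteq S^W$ induces an injection $b^*(S^W)\hookrightarrow b^*(S^{W^G})$ because the cofibre $S^W/S^{W^G}$ is free and hence has $b^*$-torsion Borel cohomology. Since $f^G$ has degree zero, $f^*$ vanishes on $b^*(S^{W^G})$ and therefore on $b^*(S^W)$; restricting the Thom class to the fibre then gives $\deg f\equiv 0\ \mathrm{mod}\ |G|$. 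Your route instead first factors $f$ through the cofibre $C=S^W/S^{W^G}$ and then, with integral coefficients, contrasts the two forgetful maps: an isomorphism on the target side (Thom class) versus multiplication by $|G|$ on the free side (quotient map $C\to C/G$, via the computation already done in the paper). The paper's version is shorter and avoids the reduction step; yours makes the origin of the factor $|G|$ very concrete as the degree of the covering $C\to C/G$, and your closing remark that a bare transfer argument is circular is exactly the right diagnosis of why one is forced to bring in the Thom class on the target.
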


\begin{proof} 
  We will use Borel
  cohomology~$b^*(X)=\H^*(EG_+\wedge_GX;\Zset/|G|)$ with
  co-efficients
  in~$\Zset/|G|$. The~$b^*$-modules~$b^*(S^{W^G})$
  and~$b^*(S^W)$ are free of rank~$1$ by the suspension
  theorem and the Thom isomorphism, respectively. The
  inclusion~$S^{W^G}\subseteq S^W$ induces an
  inclusion~$b^*(S^W)\subseteq b^*(S^{W^G})$ since the
  quotient~$S^W/S^{W^G}$ is free and therefore
  has~$b^*$-torsion Borel cohomology. By hypothesis, the
  map~$f$ induces to zero map on~$b^*(S^{W^G})$, and so it
  has to on~$b^*(S^W)$.
\end{proof}  


\section{Free points in general} 
 
Let~$W$ be an orientable real~$G$-representation which is
non-trivial and semi-free, as before. The results of the
previous section will now be generalised from~$S^W$ to
oriented~$W$-dimensional~$G$-manifolds~$M$.

\parbox{\linewidth}{\begin{prop}\label{prop:freepoints}
  Source and target of the forgetful map
  \begin{displaymath}
    \{M/M^G,S^W\}^G_{\mathrm{sf}}\longrightarrow\{M/M^G,S^W\}
  \end{displaymath}
  are free abelian on one generator. The map is injec\-tive
  with a cyclic cokernel of order~$|G|$.
\end{prop}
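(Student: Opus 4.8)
The plan is to reduce the statement about a general oriented $W$-dimensional $G$-manifold $M$ to the corresponding statement for $S^W$, which has already been established in Proposition \ref{prop:forgetfulmap}. The key geometric input is that $M/M^G$ is a free based $G$-space whose non-equivariant stable homotopy type is controlled: collapsing a tubular neighbourhood of $M^G$ in $M$, and using that $M$ is $W$-dimensional so that the normal bundle of each component $M^G_\alpha$ is (stably, after the global orientation) a bundle with fibre $W/W^G$, one sees that $M/M^G$ is non-equivariantly built from a top cell in dimension $n=\dim_\Rset W$ together with cells coming from the Thom spaces of the normal bundles, all of which lie in dimensions strictly below $n$. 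In particular the collapse map $M/M^G \to S^n$ (onto the top cell, which exists because $M$ is connected and oriented) induces an isomorphism on $\H^n(-;\Zset)$, hence on $\{-,S^W\}$ via the Hurewicz map, exactly as in the $S^W$ case; so the target $\{M/M^G,S^W\}$ is free abelian on one generator.

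First I would run the three spectral sequences of Section \ref{sec:comparison} with $F = M/M^G$ and $Y = S^W$. Since $W$ is oriented, the $G$-action on each $\{S^0,\Sigma^t S^W\} = \pi^t(S^W)$ is trivial, so Proposition \ref{prop:differentials} applies and the $E_2$-pages of the spectral sequences converging to $\{M/M^G,S^W\}^G_{\mathrm{sf}}$ and to $\{(M/M^G)/G,S^W\}$ agree row by row. As in the previous section, the relevant cohomology groups $\H^s((M/M^G)/G;\pi^t(S^W))$ vanish for $t>0$, and for $t=0$ they vanish for $s>n$ because $(M/M^G)/G$ has dimension $\leqslant n$; hence the only contribution in total degree $n$ is $(s,t)=(n,0)$, giving $\{(M/M^G)/G,S^W\}\cong\H^n((M/M^G)/G;\Zset)\cong\Zset$, and this class survives to $E_\infty$ with no differentials and no extension problems. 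The same conclusion for $\{M/M^G,S^W\}^G_{\mathrm{sf}}$ follows by Proposition \ref{prop:differentials}, so all three groups are infinite cyclic.

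It then remains to compute the forgetful map. Here I would use the same cofibre-sequence diagram as in the proof of Proposition \ref{prop:forgetfulmap}, but now applied to the pair $M^G \subseteq M$: the top row is the exact sequence obtained by mapping the cofibre sequence $M^G_+ \to M_+ \to M/M^G$ into $S^W$ equivariantly, and the vertical forgetful maps make it commute with the non-equivariant version. Given a class in $\{M/M^G,S^W\}^G_{\mathrm{sf}}$, its image in $\{M_+,S^W\}^G_{\mathrm{sf}}$ restricts to zero on $M^G$ (it comes from the cofibre), so by the divisibility proposition of Section \ref{sec:3}—or rather its evident extension from $S^W$ to $M$, which is proved the same way using Borel cohomology and the fact that $M/M^G$ is free hence $b^*$-torsion—the underlying non-equivariant degree is divisible by $|G|$. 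Comparing with the isomorphism $\{M/M^G,S^W\}\xrightarrow{\cong}\{M,S^W\}$ (from the top-cell collapse above), this shows the forgetful map lands in the index-$|G|$ subgroup; injectivity follows since both groups are infinite cyclic and the map is nonzero (a $|G|$-fold transfer-type construction, or the comparison with $\{(M/M^G)/G,S^W\}$, produces a class mapping to $|G|$ times a generator), so the cokernel is cyclic of order $|G|$. The main obstacle I expect is the bookkeeping needed to justify that $M/M^G$ really does have only a single top cell in dimension $n$ and that the orientation of $M$ makes the collapse map induce an isomorphism on top cohomology equivariantly-compatibly; once that structural fact about the $G$-CW type of $M/M^G$ is in hand, the homotopy-theoretic part is a direct transcription of the $S^W$ arguments.
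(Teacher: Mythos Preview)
Your approach is different from the paper's and can be made to work, but several steps are more delicate than you indicate. The paper does \emph{not} rerun the spectral sequence computations of Section~\ref{sec:3} for~$M$. Instead it chooses a fixed point, collapses the complement of a $W$-disc around it to obtain a $G$-map $M_+\to S^W$, hence $M/M^G\to S^W/S^{W^G}$, and then shows that the induced maps
\[
\{S^W/S^{W^G},S^W\}^{G}_{\mathrm{sf}}\longrightarrow\{M/M^G,S^W\}^{G}_{\mathrm{sf}}
\quad\text{and}\quad
\{S^W/S^{W^G},S^W\}\longrightarrow\{M/M^G,S^W\}
\]
are both isomorphisms. The non-equivariant one is the ordinary Hopf theorem; the equivariant one is surjective because the cofibre $A/A^G$ is free and of dimension at most $n-1$, and injective by a diagram chase against Proposition~\ref{prop:forgetfulmap}. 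Everything about the forgetful map then follows immediately from Proposition~\ref{prop:forgetfulmap} with no new divisibility argument needed.

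Your route---redo each step of Section~\ref{sec:3} with $M$ in place of $S^W$---is viable, but the places you call ``the same'' are not quite. First, $\H^n((M/M^G)/G;\Zset)\cong\Zset$ is true but needs a word: it is $\H^n_c$ of the connected oriented open $n$-manifold $(M\setminus M^G)/G$, using that the $G$-action preserves orientation. Second, the extension of the divisibility proposition to maps $M_+\to S^W$ does not follow by the identical argument: the original proof used that $b^*(S^W)$ and $b^*(S^{W^G})$ are free $b^*$-modules of rank one and that the inclusion is injective. For $M$ you instead need that $f^*$ lands in the image of $b^n(M/M^G)\to b^n(M_+)$ and that the restriction $b^n(M/M^G)\cong\H^n((M/M^G)/G;\Zset/|G|)\to\H^n(M/M^G;\Zset/|G|)$ is zero because the quotient map has degree~$|G|$; this works, but it is an additional step. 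Third, your argument that the forgetful map is nonzero is only gestured at; the cleanest fix is to collapse around a free orbit to produce a $G$-map $M/M^G\to G_+\wedge S^W\to S^W$ of non-equivariant degree~$|G|$. Once these are filled in your proof is complete; the paper's comparison-map argument simply bypasses all three issues by reducing to the sphere case in one stroke.
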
}
 
\begin{proof}
  Choose an orientation preserving~$G$-embedding of~$W$ onto
  a neighbourhood of a fixed point. Let~$A$ be the
  complement of the image. The collapse map from~$M$
  to~\hbox{$M/A\cong S^W$} and the forgetful maps induce a
  commutative diagram
  \begin{center}
    \mbox{ 
      \xymatrix{
        \{M/M^G,S^W\}^G_{\mathrm{sf}}\ar[d] & 
        \{S^W/S^{W^G},S^W\}^G_{\mathrm{sf}}\ar[d]\ar[l] \\ 
        \{M/M^G,S^W\} & 
        \{S^W/S^{W^G},S^W\}.\ar[l]
      }
    } 
  \end{center}
  By Proposition~\ref{prop:forgetfulmap} it is sufficient to
  show that the two horizontal maps are isomorphisms. The
  bottom row is isomorphic to
  \begin{displaymath}
        \{M_+,S^W\} \longleftarrow \{S^W,S^W\},
  \end{displaymath}
  which is an isomorphism by the ordinary Hopf theorem. As
  for the top row, it is easy to see that it is surjective:
  the fibre of the inclusion of~$M/M^G$ in to~$S^W/S^{W^G}$
  is~$A/A^G$ which is~$G$-free and cohomologically at
  most~$(n-1)$-dimensional.  It follows
  that~\hbox{$\{A/A^G,S^W\}^G_{\mathrm{sf}}$} is trivial. It
  does not follow, however,
  that~\hbox{$\{\Sigma(A/A^G),S^W\}^G_{\mathrm{sf}}$} is
  trivial, too. But injectivity of the top arrow follows
  from injectivity of the right arrow, see
  Proposition~\ref{prop:forgetfulmap}, and injectivity of
  the bottom arrow, which has already been proven.
\end{proof}
 

\section{Fixed points} 
 
Let~$T$ be a trivial~$G$-space, and~let~$M$ be an
oriented~$W$-dimensional~$G$-manifold as before. There is a
map from~$\{T,S^{W^G}\}$ to~$\{T,S^{W^G}\}^G_{\mathrm{sf}}$,
using the fact that any map between trivial~$G$-spaces is
a~$G$-map, and there is a map
from~$\{T,S^{W^G}\}^G_{\mathrm{sf}}$
to~$\{T,S^W\}^G_{\mathrm{sf}}$ induced by the inclusion
of~$S^{W^G}$ into~$S^W$ which is a~$G$-map. The composition
\begin{equation}\label{splitting_map} 
  \{T,S^{W^G}\}\longrightarrow\{T,S^W\}^G_{\mathrm{sf}} 
\end{equation} 
has a retraction, namely the fixed point map. The splitting
theorem implies that a complement for the image
of the group~$\{T,S^{W^G}\}$ in the group~$\{T,S^W\}^G_{\mathrm{sf}}$ is
isomorphic to~\hbox{$\{T,E
G_+\wedge_GS^W\}$}. (See~\cite{Lewis:Splitting}, Section 2,
for the version for incomplete universes needed here.) One
can use that to prove the following.
 
\begin{prop}\label{prop:G in}
  The group~$\{M_+^G,S^W\}^G_{\mathrm{sf}}$ is isomorphic to
  the~$\{M_+^G,S^{W^G}\}$, which is free abelian. The rank is
  the number of components of~$M^G$.
\end{prop}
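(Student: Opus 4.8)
The plan is to specialise the splitting recalled just above to $T=M_+^G$, kill the resulting complementary summand by a connectivity argument, and then read off $\{M_+^G,S^{W^G}\}$ from the ordinary Hopf theorem. First I would note that $M^G$ carries the trivial $G$-action, so taking $T=M_+^G$ in the splitting theorem produces a direct sum decomposition
\begin{displaymath}
  \{M_+^G,S^W\}^G_{\mathrm{sf}}\;\cong\;\{M_+^G,S^{W^G}\}\;\oplus\;\{M_+^G,EG_+\wedge_GS^W\},
\end{displaymath}
in which the first summand is the image of the map~(\ref{splitting_map}) and is split off by the fixed point map. It then suffices to show that the second summand vanishes and that the first has the asserted structure.

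For the vanishing, I would write $W\cong W^G\oplus V$ with $V$ the orthogonal complement, and let $n$ and $n^G$ be the dimensions of $W$ and $W^G$. Since $W$ is non-trivial, oriented and semi-free, $V$ is an oriented $G$-representation with $V^G=0$ and dimension $d=n-n^G\geqslant2$. As $G$ acts trivially on $S^{W^G}$, there is an equivalence $EG_+\wedge_GS^W\simeq S^{W^G}\wedge(EG_+\wedge_GS^V)$, and $EG_+\wedge_GS^V$ is the Thom space of the rank-$d$ vector bundle $EG\times_GV\to BG$. Because $BG$ is connected, this Thom space is $(d-1)$-connected, so $EG_+\wedge_GS^W$ is $(n-1)$-connected, whereas $M^G$, being $W^G$-dimensional, has dimension $n^G=n-d\leqslant n-2$. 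Mapping a space of dimension at most $n-2$ into an $(n-1)$-connected spectrum gives zero -- for instance, in the Atiyah-Hirzebruch spectral sequence computing $\{M_+^G,EG_+\wedge_GS^W\}$ every term $\H^p(M^G;\pi_p(EG_+\wedge_GS^W))$ that could contribute has $0\leqslant p\leqslant n^G$, and for such $p$ one has $\pi_p(EG_+\wedge_GS^W)=0$ since $p\leqslant n-1$. Hence $\{M_+^G,EG_+\wedge_GS^W\}=0$ and the fixed point map $\{M_+^G,S^W\}^G_{\mathrm{sf}}\to\{M_+^G,S^{W^G}\}$ is an isomorphism.

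It remains to identify $\{M_+^G,S^{W^G}\}$. The fixed point space decomposes as $M^G=\coprod_\alpha M_\alpha^G$ into oriented closed connected manifolds, each of dimension $n^G=\dim S^{W^G}$, so $M_+^G=\bigvee_\alpha(M_\alpha^G)_+$ and $\{M_+^G,S^{W^G}\}\cong\prod_\alpha\{(M_\alpha^G)_+,S^{n^G}\}$; each factor is infinite cyclic, by the stable form of the Hopf theorem recalled in the introduction. Thus $\{M_+^G,S^{W^G}\}$ is free abelian of rank equal to the number of components of $M^G$, which is the assertion. The only step with genuine content is the connectivity estimate for $EG_+\wedge_GS^W$ together with the dimension bound on $M^G$; everything else is bookkeeping with the splitting theorem and the ordinary Hopf theorem. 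The point I would watch is that the Thom-space identification and the connectivity bound go through over the semi-free universe, but since the Borel construction $EG_+\wedge_G(-)$ does not see the ambient universe in an essential way, I do not expect this to cause trouble.
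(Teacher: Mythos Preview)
Your proof is correct and follows essentially the same route as the paper: specialise the splitting to $T=M_+^G$, kill the complement $\{M_+^G,EG_+\wedge_GS^W\}$ by comparing connectivity against dimension, and finish with the ordinary Hopf theorem on the components of~$M^G$. The only difference is cosmetic: the paper asserts the $(n-1)$-connectivity of $EG_+\wedge_GS^W$ directly from that of~$S^W$, whereas you unpack it via the Thom-space identification $EG_+\wedge_GS^W\simeq S^{W^G}\wedge\operatorname{Th}(EG\times_GV)$, which is a perfectly good (and arguably more transparent) justification of the same fact.
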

 
\begin{proof}
  Since~$S^W$ is~$(n-1)$-connected, so is~$EG_+\wedge_GS^W$. The
  dimension of~$M_+^G$ is smaller than~$n$. It follows that the
  group~\hbox{$\{M_+^G,EG_+\wedge_GS^W\}$} is zero. Thus, the map~(\ref{splitting_map}) is an isomorphism in the case~$T=M_+^G$. This
  and the ordinary Hopf theorem imply
  that~$\{M_+^G,S^W\}^G_{\mathrm{sf}} \cong\{M_+^G,S^{W^G}\}$, which
  is free abelian of the indicated rank.
\end{proof}
 

\section{All points} 
 
Let~$M$ be an oriented~$W$-dimensional~$G$-manifold as
before. We are now in the position to use the information
gathered on the free points and on the fixed points in order
to determine the structure of the
source~$\{M_+,S^W\}^G_{\mathrm{sf}}$ of the ghost map.

\begin{prop}\label{prop:structure of the source}
  The group~$\{M_+,S^W\}^G_{\mathrm{sf}}$ is free abelian of
  rank one more than the number of components of~$M^G$.
\end{prop}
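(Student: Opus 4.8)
The plan is to collapse the fixed submanifold and exploit the resulting cofibre sequence $M^G_+\to M_+\to M/M^G$ of based $G$-spaces. Applying the cohomology theory $\{-,S^W\}^G_{\mathrm{sf}}$ to it, part of the long exact sequence reads
\begin{displaymath}
  \{\Sigma M^G_+,S^W\}^G_{\mathrm{sf}}\to\{M/M^G,S^W\}^G_{\mathrm{sf}}\to\{M_+,S^W\}^G_{\mathrm{sf}}\to\{M^G_+,S^W\}^G_{\mathrm{sf}}\to\{M/M^G,\Sigma S^W\}^G_{\mathrm{sf}}.
\end{displaymath}
By Proposition~\ref{prop:freepoints} the second term is free abelian of rank one, and by Proposition~\ref{prop:G in} the fourth term is isomorphic to $\{M^G_+,S^{W^G}\}$ and free abelian of rank equal to the number $c$ of components of $M^G$. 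So it suffices to show that the first and the last map in the displayed sequence are zero: one is then left with a short exact sequence $0\to\Zset\to\{M_+,S^W\}^G_{\mathrm{sf}}\to\Zset^c\to0$, which splits because $\Zset^c$ is free, and the rank count gives $c+1$.

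For the last map it is enough that the group $\{M/M^G,\Sigma S^W\}^G_{\mathrm{sf}}$ vanishes. Now $M/M^G$ is a finite free based $G$-CW complex of dimension at most $n=\dim_\Rset(W)$: equivariantly triangulate $M$ so that $M^G$ is a subcomplex, and note that every cell off $M^G$ is free since $M$ is semi-free. Feeding $M/M^G$ into the spectral sequence~(\ref{1st_ss}) with target $Y=\Sigma S^W$, the $E_1$-term in bidegree $(s,t)$ is a finite sum, one summand for each $s$-cell, of copies of $\{S^0,\Sigma^t Y\}\cong\{S^0,S^{n+1+t}\}$. On the line $s+t=0$, which computes the abutment in degree zero, this vanishes whenever $s\leqslant n$, because then $n+1-s\geqslant1$, and there are no cells at all when $s>n$; so the whole line is already zero on the $E_1$-page, and $\{M/M^G,\Sigma S^W\}^G_{\mathrm{sf}}=0$.

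The first map, the connecting homomorphism $\{\Sigma M^G_+,S^W\}^G_{\mathrm{sf}}\to\{M/M^G,S^W\}^G_{\mathrm{sf}}$, is the delicate one, because its source need not vanish and the spectral sequence argument does not apply to the trivial $G$-space $\Sigma M^G_+$. Instead I would compose it with the forgetful map $\{M/M^G,S^W\}^G_{\mathrm{sf}}\to\{M/M^G,S^W\}$, which is injective by Proposition~\ref{prop:freepoints}. A cofibre sequence of $G$-spaces is in particular a cofibre sequence of spaces, so the forgetful transformation is compatible with connecting homomorphisms; hence the composite equals the non-equivariant connecting homomorphism out of $\{\Sigma M^G_+,S^W\}$, precomposed with the forgetful map. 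But $\{\Sigma M^G_+,S^W\}$ vanishes non-equivariantly, since $\Sigma M^G_+$ is a complex of dimension at most $\dim(M^G)+1=n^G+1\leqslant n-1$, strictly less than $n=\dim(S^W)$, by the hypothesis that the codimension of $M^G$ is at least two. Therefore the composite is zero, and injectivity of the forgetful map forces the connecting homomorphism itself to be zero. Checking the compatibility of the forgetful transformation with the boundary maps, and pinning down exactly which dimension hypothesis makes the relevant source vanish non-equivariantly, is the step I expect to take the most care; the rest is formal once it is in place.
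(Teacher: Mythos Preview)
Your proof is correct and follows essentially the same route as the paper: the same cofibre sequence $M^G_+\to M_+\to M/M^G$, the same vanishing of $\{M/M^G,\Sigma S^W\}^G_{\mathrm{sf}}$ by freeness and dimension, and the same argument that the connecting homomorphism is zero by composing with the injective forgetful map and observing that the non-equivariant group $\{\Sigma M^G_+,S^W\}$ vanishes for dimension reasons. The paper records the latter step as a commutative square rather than phrasing it in terms of naturality of the forgetful transformation, but the content is identical.
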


\begin{proof}
  The starting point for the calculation
  of~$\{M_+,S^W\}^G_{\mathrm{sf}}$ is the cofibre sequence
  \begin{displaymath} 
    M_+^G\longrightarrow M_+\longrightarrow M/M^G.
  \end{displaymath} 
  Mapping this into~$S^W$, we get a long exact sequence 
  \begin{displaymath}
    \cdots
    \longleftarrow\{M_+^G,S^W\}^G_{\mathrm{sf}}  
    \longleftarrow\{M_+,S^W\}^G_{\mathrm{sf}} 
    \longleftarrow\{M/M^G,S^W\}^G_{\mathrm{sf}} 
    \longleftarrow
    \cdots.
  \end{displaymath}
  The group in the middle is to be computed. On the left side, since
  the~$G$-space~$M/M^G$ is free and of smaller dimension than~$\Sigma
  S^W$, the next group on the
  left~\hbox{$\{\Sigma^{-1}(M/M^G),S^W\}^G_{\mathrm{sf}}$}, which is
  isomorphic to~$\{M/M^G,\Sigma S^W\}^G_{\mathrm{sf}}$, is trivial. On
  the right side, the diagram
  \begin{center}
    \mbox{ \xymatrix{
        \{M/M^G,S^W\}^G_{\mathrm{sf}}\ar[d]_{\text{forget}} &
        \{\Sigma M_+^G,S^W\}^G_{\mathrm{sf}}\ar[d]^{\text{forget}}\ar[l] \\
        \{M/M^G,S^W\} & \{\Sigma M_+^G,S^W\}\ar[l] } }
  \end{center}
  shows that the top map from $\{\Sigma
  M_+^G,S^W\}^G_{\mathrm{sf}}$ to
  $\{M/M^G,S^W\}^G_{\mathrm{sf}}$ is zero: the left map is
  injective by Proposition~\ref{prop:freepoints}, and the
  bottom right group is trivial by the dimension and
  connectivity of the spaces involved.

  To sum up, there is a short exact sequence
  \begin{displaymath}
    0\longleftarrow
    \{M_+^G,S^W\}^G_{\mathrm{sf}}\longleftarrow
    \{M_+,S^W\}^G_{\mathrm{sf}}\longleftarrow
    \{M/M^G,S^W\}^G_{\mathrm{sf}}\longleftarrow 
    0. 
  \end{displaymath}
  By Proposition~\ref{prop:G in}
  again,~$\{M_+^G,S^W\}^G_{\mathrm{sf}}\cong\{M_+^G,S^{W^G}\}$
  is free abelian of the indicated rank. Thus, the short
  exact sequence must be splittable.
\end{proof}


\section{The ghost map}\label{sec:7}
 
Let~$M$ be an oriented~$W$-dimensional~$G$-manifold as
before. Now that the structure of the source and the target
of the ghost map are known, it is time to study the map
itself.

\begin{prop}\label{prop:ghost map is injective}
  The ghost map~{\upshape (\ref{ghost map})} is injective
  with a cyclic cokernel of order~$|G|$.
\end{prop}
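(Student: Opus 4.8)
The plan is to compare the ghost map with the short exact sequence
\[
  0\longleftarrow\{M_+^G,S^W\}^G_{\mathrm{sf}}\longleftarrow\{M_+,S^W\}^G_{\mathrm{sf}}\longleftarrow\{M/M^G,S^W\}^G_{\mathrm{sf}}\longleftarrow0
\]
produced in the proof of Proposition~\ref{prop:structure of the source}, whose middle term is the source of the ghost map, and to promote it to a map of short exact sequences whose outer two vertical arrows are already understood. Write $\phi_1$ and $\phi_2$ for the two components of the ghost map, the forgetful map to $\{M_+,S^W\}$ and the fixed-point map to $\{M_+^G,S^{W^G}\}$. The first point is that $\phi_2$ is precisely the surjection $\{M_+,S^W\}^G_{\mathrm{sf}}\to\{M_+^G,S^W\}^G_{\mathrm{sf}}$ of the sequence above, namely restriction along $M_+^G\hookrightarrow M_+$, followed by the isomorphism $\{M_+^G,S^W\}^G_{\mathrm{sf}}\cong\{M_+^G,S^{W^G}\}$ of Proposition~\ref{prop:G in}, which is itself the fixed-point map. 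In particular $\phi_2$ is surjective, with kernel $K\cong\{M/M^G,S^W\}^G_{\mathrm{sf}}$.

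On $K$ the ghost map reduces to $\phi_1|_K$, since $\phi_2$ vanishes there. As the inclusion $K\hookrightarrow\{M_+,S^W\}^G_{\mathrm{sf}}$ is induced by the collapse $M_+\to M/M^G$, and forgetting the action commutes with it, $\phi_1|_K$ factors as the forgetful map $\{M/M^G,S^W\}^G_{\mathrm{sf}}\to\{M/M^G,S^W\}$ of Proposition~\ref{prop:freepoints}, followed by the collapse-induced map $\{M/M^G,S^W\}\to\{M_+,S^W\}$. The former is injective with cyclic cokernel of order~$|G|$ by Proposition~\ref{prop:freepoints}; the latter is an isomorphism, which one reads off from the long exact sequence of $M_+^G\to M_+\to M/M^G$ once $\{M_+^G,S^W\}$ and $\{\Sigma M_+^G,S^W\}$ are seen to vanish — and they do, by the ordinary Hopf theorem, because the codimension of $M^G$ is at least two (automatic, as $W$ is non-trivial and $M$ is oriented). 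Hence $\phi_1|_K$ is injective with cyclic cokernel of order~$|G|$.

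It remains to assemble the map from the displayed sequence to the split exact sequence $0\to\{M_+,S^W\}\to\{M_+,S^W\}\oplus\{M_+^G,S^{W^G}\}\to\{M_+^G,S^{W^G}\}\to0$ whose middle vertical is the ghost map, left vertical is $\phi_1|_K$, and right vertical is the isomorphism of Proposition~\ref{prop:G in}; commutativity of both squares is immediate from the two identifications just made. The snake lemma then identifies the kernel and the cokernel of the ghost map with those of $\phi_1|_K$, which gives injectivity and a cyclic cokernel of order~$|G|$. I expect the only real friction, as opposed to any genuine difficulty, to lie in the bookkeeping of the middle step: pinning down that $\phi_1|_K$ is exactly the quoted composite, and that the collapse induces an isomorphism $\{M/M^G,S^W\}\cong\{M_+,S^W\}$ — both of which reduce to dimension and connectivity estimates already in play, together with the observation that the isomorphism of Proposition~\ref{prop:G in} is given by the fixed-point map.
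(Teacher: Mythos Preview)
Your proof is correct and follows essentially the same route as the paper: you build a map of short exact sequences from the one in Proposition~\ref{prop:structure of the source} to the split sequence on the target of the ghost map, with the forgetful map of Proposition~\ref{prop:freepoints} on one end and the fixed-point isomorphism of Proposition~\ref{prop:G in} on the other, and conclude by the snake lemma. The only difference is that you spell out the verification that $\{M/M^G,S^W\}\to\{M_+,S^W\}$ is an isomorphism, which the paper treats as already established.
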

 
\begin{proof}
  One may compare the short exact sequence used in the proof
  of Proposition~\ref{prop:structure of the source} to the
  short exact sequence
  \begin{displaymath}
    0\leftarrow
    \{M_+^G,S^{W^G}\} \leftarrow 
    \{M_+,S^W\}\oplus\{M_+^G,S^{W^G}\} \leftarrow
    \{M/M^G,S^W\} \leftarrow 0,
  \end{displaymath}
  which is built by using the isomorphism
  between the group~$\{M/M^G,S^W\}$ and the group~$\{M_+,S^W\}$
  discussed before and the identity
  on~$\{M_+^G,S^{W^G}\}$. The two short exact sequences
  yield the rows in the diagram
  \begin{center}
    \mbox{ 
      \xymatrix{
        \{M_+^G,S^W\}^G_{\mathrm{sf}} \ar[d] & \{M_+,S^W\}^G_{\mathrm{sf}} \ar[d]\ar[l] &
        \{M/M^G,S^W\}^G_{\mathrm{sf}} \ar[d]\ar[l] \\ \{M_+^G,S^{W^G}\}
        & \{M_+,S^W\}\oplus\{M_+^G,S^{W^G}\} \ar[l] & \{M/M^G,S^W\},
        \ar[l] } }
  \end{center}
  which will now be used to compare both of them. The
  vertical arrow on the right is the forgetful map. This map
  was shown to be injective with a cyclic cokernel of
  order~$|G|$ in Proposition~\ref{prop:forgetfulmap}. The
  vertical map in the middle is the ghost map: it sends
  a~$G$-map~$f$ to the pair~$(f,f^G)$. The vertical arrow on
  the left is the isomorphism which sends~$f$ to~$f^G$ as
  discussed above. The diagram commutes, and the snake lemma
  implies the result.
\end{proof}
 
From what has already been shown, it is by now established
that the image of the group~$\{M_+,S^W\}^G_{\mathrm{sf}}$
under the ghost map is a subgroup of index~$|G|$ in the
direct sum \hbox{$\{M_+,S^W\}\oplus\{M_+^G,S^{W^G}\}$}.
This subgroup contains~$(|G|,0)$ and projects
onto~$\{M_+^G,S^{W^G}\}$.  But, this does not determine the
image. Additional information from the geometric situation
seems to be required. This will be supplied for in the
following final section.
 

\section{Examples of maps}\label{sec:examples}
 
Let~$M$ be an oriented~$W$-dimensional~$G$-manifold as
before. In this final section, the group~$\{M_+,S^W\}$,
which is free abelian on one generator by the ordinary Hopf
theorem, has a distinguished generator, namely the one that
preserves the orientations. The elements of this group can
hence be thought of as integers. Similarly, the fixed point
space of~$M$ decomposes into components:
\begin{displaymath}
    M^G=\coprod_{\alpha}M^G_\alpha.
\end{displaymath}
The dimension of any of the components~$M^G_\alpha$ agrees
with that of~$S^{W^G}$. Thus the
group~$\{(M^G_\alpha)_+,S^{W^G}\}$ has a distinguished
generator, too.  Collecting these together, the restriction
of an element in~$\{M_+,S^W\}^G_{\mathrm{sf}}$ to the fixed
point space gives a family of integers, one for
each~$\alpha$. Using these identifications, the ghost map
sends an equivariant map to a pair~$(x,y)$ consisting of an
integer~$x$ and a family~$y=(y_{\alpha})$ of integers.
 
With these preparations, we may now prove
Theorem~\ref{thm:congruences} from the introduction. The
proof works as in Example 3 from the introduction.

\begin{proof}
  Let us fix an~$\alpha$. For any point in~$M^G_\alpha$, a
  neighbourhood in~$M$ is~$G$-homeomorphic to
  the~$G$-representation~$W$. Collapsing the complement
  yields a~$G$-map~$f_\alpha$ from~$M_+$ to~$S^W$. Note that
  this map is the chosen generator of~$\{M_+,S^W\}$, and the
  restriction to~$(M^G_\alpha)_+$ is the chosen generator of
  the group~$\{(M^G_\alpha)_+,S^W\}$. On the other hand,
  for~\hbox{$\beta\neq\alpha$}, the collapse map sends the
  subspace~$(M^G_\beta)_+$ to a point. Thus, the
  corresponding element in~$\{(M^G_\beta)_+,S^W\}$ is
  zero. Thus if~$1_{\alpha}$ denotes the characteristic
  function of~$\alpha$, the element
  of~$\{M_+,S^W\}^G_{\mathrm{sf}}$ which is represented
  by~$f_{\alpha}$ is mapped to~$(1,1_{\alpha})$.
  
  Now for each~$\alpha$ there has been produced
  a~$G$-map~$f_{\alpha}$ in~$\{M_+, S^W\}^G_{\mathrm{sf}}$
  which the ghost map sends to a pair~$(x,y)=(1,1_\alpha)$
  satisfying the relation in the theorem.  The subgroup of
  all pairs satisfying that relation is the unique subgroup
  of index~$|G|$ which contains the pairs~$(1,1_{\alpha})$
  for all~$\alpha$.
\end{proof}
  

\vfill

\parbox{\linewidth}{%
Markus Szymik\\
Department of Mathematical Sciences\\
NTNU Norwegian University of Science and Technology\\
7491 Trondheim\\
NORWAY\\
\href{mailto:markus.szymik@ntnu.no}{markus.szymik@ntnu.no}\\
\href{https://folk.ntnu.no/markussz}{folk.ntnu.no/markussz}}

\end{document}